\documentclass[a4paper]{article}
\usepackage{amsmath, amsfonts}
\usepackage{amssymb, latexsym}
\usepackage{amsthm}
\usepackage{mathtools}
\usepackage{comment}

\usepackage{tikz}
\usetikzlibrary{arrows.meta}
 \usetikzlibrary{patterns}

\usepackage{easy-todo}

\usepackage[shortlabels]{enumitem}
\setlist[enumerate]{leftmargin=*,align=left,labelindent=\parindent}

\usepackage[%dvipdfmx,
colorlinks=true,       % false: boxed links; true: colored links
linkcolor=blue,          % color of internal links (change box color with linkbordercolor)
citecolor=blue,        % color of links to bibliography
plainpages=false,      % do page number anchors as plain Arabic
pdfpagelabels,
]{hyperref}

\makeatletter
\def\widebreve{\mathpalette\wide@breve}
\def\wide@breve#1#2{\sbox\z@{$#1#2$}%
     \mathop{\vbox{\m@th\ialign{##\crcr
\kern0.08em\brevefill#1{0.8\wd\z@}\crcr\noalign{\nointerlineskip}%
                    $\hss#1#2\hss$\crcr}}}\limits}
\def\brevefill#1#2{$\m@th\sbox\tw@{$#1($}%
  \hss\resizebox{#2}{\wd\tw@}{\rotatebox[origin=c]{90}{\upshape(}}\hss$}
\makeatletter

\DeclareMathOperator{\imp}{\,\rightarrow\,}
\DeclareMathOperator{\defeqiv}{\stackrel{\textup{def}}{\iff}}
\DeclareMathOperator{\defeql}{\stackrel{\textup{def}}{\  =\  }}
\newcommand{\dotminus}{\mathbin{\ooalign{\hss\raise.6ex\hbox{$\cdot$}\hss\crcr$-$}}}

\newcommand{\ELZero}{\mathrm{EL_0}}
\newcommand{\EL}{\mathrm{EL}}

\newcommand{\RCA}{\mathrm{RCA_0}}
\newcommand{\T}{\mathrm{T}}

\newcommand{\BISH}{\mathrm{BISH}}
\newcommand{\qfAC}{\mathrm{QF\text{-}AC_{00}}}

\newcommand{\WKL}{\mathrm{WKL}}
\newcommand{\WKLf}{\mathrm{BKL}}
\newcommand{\LLPO}{\mathrm{LLPO}}

\newcommand{\IVT}{\mathrm{IVT}}

\newcommand{\UInt}{[0,1]}
\newcommand{\Real}{\mathbb{R}}
\newcommand{\Rat}{\mathbb{Q}}

\newcommand{\Nat}{\mathbb{N}}

\newcommand{\Baire}{\Nat^\Nat}
\newcommand{\BE}{\mathrm{BE}}
\newcommand{\Bin}{\Two^\ast}
\newcommand{\BTree}{\Two^\Nat}
\newcommand{\Seq}{\Nat^\ast}
\newcommand{\Two}{\left\{0,1 \right\}}

\newcommand{\nil}{\langle \,\rangle}

\newcommand{\lth}[1]{\lvert #1 \rvert}
\newcommand{\Sd}{\mathsf{Sd}}
\newcommand{\mesh}{\mathsf{mesh}}

\newcommand{\Dist}[2]{d(#1,#2)}

\newcommand{\Num}[1]{\widehat{#1}}

\newtheorem{theorem}{Theorem}[section]
\newtheorem{lemma}[theorem]{Lemma}

\theoremstyle{definition}

\theoremstyle{remark}
\newtheorem{remark}[theorem]{Remark}
\newtheorem{notation}[theorem]{Notation}

\numberwithin{equation}{section}

\title{Constructive equivalence between Brouwer's fixed-point
theorem and weak K\"onig's lemma}
\author{Tatsuji Kawai\footnote{
  Email: \texttt{tatsuji.kawai@kochi-u.ac.jp}}\\
  \small
  Department of Information Science, Kochi University\\
  \small
  2-5-1 Akebono-cho, Kochi 780-8520 Japan
}
\date{}
\begin{document}
\maketitle
\begin{abstract}
In the context of constructive reverse mathematics, we show that
Brouwer's fixed-point theorem and weak K\"{o}nig's lemma ($\WKL$) are
equivalent. To derive $\WKL$ from Brouwer's fixed-point theorem, the
construction of a continuous function on the unit square without
fixed points due to Orevkov [Soviet Math.\ Doklady (1963), 1253--1256]
is generalised to yield a uniformly continuous function on the unit
square whose fixed points encode information about infinite paths
of a given infinite tree.

\medskip

\noindent\textsl{Keywords:}
constructive mathematics;
reverse mathematics;
%intermediate value theorem;
weak K\"{o}nig's lemma;
Brouwer's fixed-point theorem
 \\[3pt]
\noindent\textsl{MSC2020:}
03B30; % Reverse math
26E40; % Constructive real analysis
03F60; % Constructive and recursive analysis
% 03F55  % Intuitionistic mathematics
03F35; % Second- and higher-order arithmetic and fragments
03F50 % Metamathematics of constructive systems
\end{abstract}

%\listoftodos

\section{Introduction}
\label{sec:Introduction}
Over the base system $\RCA$ of classical reverse
mathematics~\cite{SimpsonBook}, the weak K\"{o}nig's lemma ($\WKL$) is
equivalent to Brouwer's fixed-point theorem~\cite{ShiojiTanaka}
\cite[IV.7]{SimpsonBook}. On the other hand, in Bishop's constructive
mathematics ($\BISH$) \cite{Bishop-67,Bishop-Bridges-85}, an informal
mathematics that is based on intuitionistic logic and assumes certain
function existence axioms including the axiom of countable choice,
Brouwer's fixed-point theorem is equivalent to 
the \emph{lesser limited principle of omniscience}
($\LLPO$)~\cite{Hendtlass2012FixedPT}, 
which is in turn equivalent to $\WKL$ \cite{IshiharaLLPOWKLHahn}.  Thus,
Brouwer's fixed-point theorem is equivalent to $\WKL$ over $\BISH$.
In fact, many mathematical theorems that 
are either equivalent to $\WKL$ 
or provable in $\RCA$ become equivalent to $\LLPO$ over $\BISH$.
Examples include Brouwer's
fixed-point theorem and Cantor's intersection theorem
\cite{IshiharaLLPOWKLHahn}, which are equivalent to $\WKL$,
and the binary expansion of real numbers in $\UInt$ and the
intermediate value theorem \cite{IshiharaEtalBinaryExpansion},
which are provable in $\RCA$. 
Hence, the equivalence between Brouwer's fixed-point
theorem and $\WKL$ over $\BISH$ cannot be directly compared to the
classical equivalence between these two statements.

The aim of this paper, therefore, is to establish the equivalence between
Brouwer's fixed-point theorem and $\WKL$ in the context of
\emph{constructive reverse mathematics}~\cite{ConstRevMatheCompactness}.
By constructive reverse mathematics, we mean mathematics based on
a formal system $\T$ that is contained in both $\BISH$ and $\RCA$. 
In this context, several equivalences related to
Brouwer's fixed-point theorem are known:
\begin{enumerate}
  \item the Weak Fan Theorem and Brouwer's Approximate
    Fixed-Point Theorem \cite[Theorem 7]{Veldman2009}; 
  \item Kleene's alternative to the Weak Fan Theorem and
      the existence of a continuous function on the unit square
      without fixed points \cite[Theorem 10]{Veldman2009};
  \item the intermediate value theorem and the convex version of
    $\WKL$ \cite[Theorem 3]{IshiharaEtalBinaryExpansion}.
\end{enumerate}
In the first equivalence, the Weak Fan Theorem
is the contraposition of $\WKL$~\cite{IshiharaWKLimpliesFan},
whereas Brouwer's Approximate Fixed-Point Theorem is the statement
that for every $\varepsilon > 0$ and for every (pointwise) continuous function $f$
on the unit square, there exists a point $x$ whose distance to $f(x)$
is less than $\varepsilon$.
The second equivalence is the contrapositive form of the equivalence
between $\WKL$ and Brouwer's fixed-point theorem. Here, the Kleene's
Alternative to the Weak Fan Theorem is the statement that there exists
an infinite binary tree without infinite paths
\cite[Lemma 9.8]{KleeneVesley}\cite{VeldmanFANKleene}\cite[4.7.6]{ConstMathI}.
This equivalence provides a constructive alternative to the classical
proof. In fact, the derivation of $\WKL$ from Brouwer's fixed-point
theorem in \cite[IV.7.7]{SimpsonBook} is identical to the constructive
proof of one direction of the second equivalence
(cf.\ Remark~\ref{rm:Replacement}).
The last equivalence can be seen as a one-dimensional version of the
required equivalence between $\WKL$ and Brouwer's fixed-point theorem. 

In this paper, after reviewing the base system $\ELZero$ for
constructive reverse mathematics, we first show that $\WKL$ implies
Brouwer's fixed-point theorem. The proof consists of a constructive
proof of Brouwer's approximate fixed-point theorem for uniformly
continuous functions via Sperner's lemma, followed by a
direct application of $\WKL$ to obtain an exact fixed-point.
This part is a straightforward adaptation of the classical proof
\cite[IV 7.5]{SimpsonBook}. See Section~\ref{sec:WKLImpBrouwer}.
 
Next, we show that Brouwer's fixed-point theorem implies $\WKL$.  The
proof is inspired by the construction of a continuous function on the
unit square without fixed points in the classical proof \cite[IV.7.7]
{SimpsonBook}, which is originally due to Orevkov \cite{Orevkov1963}.
Here, this construction is generalised to yield a uniformly continuous
function on the unit square whose fixed points encode information about
infinite paths of a given infinite tree.
See Section~\ref{sec:BrouwerImpWKL}.

\section{Formal base system}
We adopt the system $\ELZero$ \cite[Section 2]{IshiharaEtalBinaryExpansion}
as our base system for constructive reverse mathematics.
$\ELZero$ is a subsystem of elementary analysis $\EL$ \cite[3.6]{ConstMathI},
which is based on two-sorted intuitionistic logic (one
sort for natural numbers and the other for functions on natural
numbers). $\ELZero$ is obtained from $\EL$ by restricting the induction
scheme to quantifier-free formulas. The classical system $\RCA$
is obtained from $\ELZero$ by adding the law of excluded middle.  For
a detailed description of $\ELZero$, the reader is referred to
\cite[Section 2]{IshiharaEtalBinaryExpansion}. 
For the purpose of our investigation, however, some comments are in
order.

First, $\ELZero$ has only the \emph{quantifier-free axiom of choice}:
\[
  \qfAC: \forall m \exists n A(m,n) \imp \exists \alpha \forall m
  A(m,\alpha(m)),
\]
where $m,n$ range over natural numbers and $\alpha$ ranges over
functions on natural numbers, and $A$ is a quantifier-free formula that
does not contain $\alpha$ as a free variable.

Second, since $\ELZero$ has only two sorts, a uniformly continuous
function on real numbers (or more generally Euclidean spaces) must be
defined in terms of a function on natural numbers
(see \cite[Section 5]{Ishihara2009Relativize} for the details of the encoding).
However, as in \cite{IshiharaEtalBinaryExpansion}, we work with
uniformly continuous functions on Euclidean spaces as if such higher
type objects are directly available in $\ELZero$.

\paragraph{Notations}
The letters $i,j,k,l,m,n,\dots$ range over the set $\Nat$ of natural numbers,
and $\alpha,\beta,\dots$ range over the set $\Nat^{\Nat}$ of functions
on natural numbers.
Using a pairing function, one can code finite sequences of natural
numbers in $\ELZero$. The set of finite sequences of natural numbers
is denoted by $\Seq$, the set of binary sequences is denoted by
$\Bin$, and letters $a,b,c,\dots$ range over $\Seq$. Also, $\nil$
denotes the empty sequence, and $\langle i_0, \dots, i_{n-1} \rangle$
denotes a finite sequence of length $n$. The concatenation of finite
sequences $a$ and $b$ is denoted by $a*b$, the length of $a$ is
denoted by $|a|$, for $i < |a|$, the $i$-th component of $a$ is
denoted by $a_i$, and $a \preceq b$ means that $a$ is an initial
segment of $b$. Lastly, for an infinite sequence $\alpha$,
$\overline{\alpha}n$ denotes the initial segment of $\alpha$ of
length $n$, namely $\langle \alpha(0),\dots,\alpha(n-1)\rangle$.
We also define $\overline a n = \langle a_{0},\dots,a_{n-1} \rangle$
for a finite sequence $a$ whenever $n \leq \lth{a}$.

\section{\texorpdfstring{$\WKL$}{WKL} implies Brouwer's fixed-point theorem}
\label{sec:WKLImpBrouwer}
We first recall some notions related to $\WKL$.
A \emph{tree} is a detachable subset%
\footnote{We identify a detachable subset with its characteristic
function.} $T \subseteq \Seq$ such that 
$\nil \in T$  and $a \in T \land b \preceq a \imp b \in T$
for all $a, b \in \Seq$. Let $T$ be a tree:
\begin{itemize}
  \item $T$ is \emph{binary} if $T \subseteq \Bin$.

  \item $T$ is \emph{infinite} if $\forall n \in \Nat \exists a
    \in \Seq \left( |a| = n \land a \in T \right)$.

  \item An \emph{infinite path} of $T$ is a sequence 
    $\alpha \in \Baire$ such that
    $\forall n \in \Nat \; \overline{\alpha}n \in T$.
\end{itemize}
\emph{Weak K\"{o}nig's lemma} ($\WKL$) is the following principle:
\begin{description}
  \item[$\WKL$:]
    Every infinite binary tree has an infinite path.
\end{description}
It can be shown that $\WKL$ is equivalent to the following form
of K\"{o}nig lemma (called \emph{Bounded K\"{o}nig's lemma})
\cite[Proposition 4.7]{FujiwaraDFT}:
\begin{description}
  \item[$\WKLf$:] Every infinite bounded tree has an infinite path.
\end{description}
Here, a tree $T \subseteq \Seq$ is \emph{bounded}
if it has a \emph{height-wise bounding function}, i.e.,
a function $f \colon \Nat \to \Nat$ such that
\[
  \forall a \in T \forall i < |a| \left(a_i \leq f(i)  \right).
\]

In order to state Brouwer's fixed-point theorem, we recall the notion
of the standard $n$-simplex. Fix a natural number $n \in \Nat$.
The standard $n$-simplex, denoted $\Delta_{n}$, is the convex hull of
$e_1, \dots, e_{n+1}$, where $e_{i} $ is the $i$-th standard unit vector in
$\Real^{n+1}$:
\[
  \Delta_{n}
  \defeql
  \left\{ \sum_{j=1}^{n+1}a_{j}e_{j}
    \mid
    \sum_{j=1}^{n+1}a_{j} = 1, a_{j} \geq 0 \right\}.
\]
Then Brouwer's fixed-point theorem states:
\begin{quote}
  For any $n \in \Nat$, any uniformly continuous function
  $f \colon \Delta_{n} \to \Delta_{n}$ has a fixed-point.
\end{quote}

Let $(P,\leq)$ be a finite poset where $\leq$ is decidable.
A \emph{chain} in $P$ is an inhabited finite sequence $\langle a_{0},
a_{1}, \dots,a_{l} \rangle$ in $P$ such that
\begin{equation*}
  a_{0} < a_{1} < \cdots < a_{l}.
\end{equation*}
Let ${\Sd(P)}$ denote the poset of all chains in $P$
ordered by inclusion.
A chain $a_{0} < a_{1} < \cdots < a_{l}$ of length $l+1$ in $P$ is
called an $l$-simplex of $P$. As an element of $\Sd(P)$, such a
chain is denoted by $\langle a_{0}, \dots, a_{l} \rangle$.%
\footnote{The notation of a simplex (or chain) conflicts with that
of finite sequence; however, it should be clear from the context
which of these is intended.}

For a fixed $n \in \Nat$, let $[n]$ denote the poset $\left\{ 0,\dots,
n \right\}$  with the usual order $0 < 1 < \dots < n$.
Then the poset $\Sd([n])$ is called the \emph{barycentric subdivision} of $[n]$.%
\footnote{Specifically, the simplices (or
chains) of $\Sd([n])$ form the barycentric subdivision of $[n]$.}
More generally, for each $k \in \Nat$, define $\Delta^{k}$ (called the
$k$-th barycentric subdivision of $[n]$) inductively by
\begin{align*}
  \Delta^{0} &\defeql [n],  &
  \Delta^{k+1} &\defeql \Sd(\Delta^{k}).
\end{align*}
Set 
$
\Delta \defeql \bigcup_{k \in \Nat} \Delta^{k}.
$
\begin{notation}
  Elements of $\Delta$ are denoted by small Greek letters
  $\sigma,\tau,\mu,\dots$. For each $k \in \Nat$,
  $\sigma \in \Delta^{k}$, and
  $\tau \in \Delta^{k+1}$, we write $\sigma \in \tau$ if
  $\sigma$ is an element of the chain $\tau$.%
\end{notation}
For each $k \in \Nat$ and $\sigma \in \Delta^{k}$, we assign
the \emph{barycentric coordinate of $\sigma$}, denoted $\sigma^{*}$,
by induction on $k$ as follows:
\begin{align*}
  i^{*} &\defeql e_{i+1} && (i \in [n]),\\
  \langle \sigma_{0}, \dots, \sigma_{m} \rangle^{*}
  &\defeql
  \frac{1}{m+1} \sum_{j = 0}^{m}\sigma_{j}^{*}
  && (\langle \sigma_{0}, \dots, \sigma_{m} \rangle \in \Delta^{k+1}).
\end{align*}
For $\sigma \in \Delta$ and $i \in [n]$, we say that $\sigma$ is on
the \emph{$(i+1)$-th facet} of $\Delta_n$ if no membership chain $i
\in  \cdots \in \sigma$ exists in $\Delta$. It is easy to see that
$\sigma$ is on the $(i+1)$-th facet if and only if $(\sigma^{*})_{i+1} = 0$. 

For each $k \in \Nat$, a \emph{labelling} of $\Delta^{k}$ is a
function $L \colon \Delta^{k} \to [n]$. A labelling is a \emph{Sperner
labelling} if 
\[
  (\sigma^{*})_{L(\sigma) + 1} > 0
\]
for all $\sigma \in \Delta^{k}$.%
\footnote{
  The condition is equivalently expressed as follows: 
  if $\sigma$ is on the $(i+1)$-th facet of $\Delta_n$, then $L(\sigma) \neq i$.
}
An $n$-simplex $\langle \sigma_{0}, \dots, \sigma_{n} \rangle$ in
$\Delta^{k}$ is \emph{fully labelled} if 
\[
  \left\{ L(\sigma_i) \mid 0 \leq i \leq n \right\} = [n].
\]
The following is a special case of Sperner's lemma \cite{SpernerLemma}.
\begin{lemma}[Sperner's Lemma]
  \label{lem:Sperner}
    Let $k \in \Nat$, and let $L \colon \Delta^{k} \to [n]$ be a
    Sperner labelling of $\Delta^{k}$.  Then there exists an odd
    number of fully labelled $n$-simplices.
\end{lemma}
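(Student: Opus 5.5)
We argue by induction, in the form of a double induction on $n$ and $k$. Everything involved is a finite combinatorial object: $\Delta^k$ is a finite decidable poset and $L$ is a finite function. The statement is therefore quantifier-free in codes, and the quantifier-free induction of $\ELZero$ suffices. The classical argument can be run constructively without change, since all counting is over finite decidable sets.

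First we reduce to a single subdivision step. Write $N_n(k,L)$ for the number of fully labelled $n$-simplices of $\Delta^k$ with respect to $L$, and use the standard door-counting argument. Fix $n$ and a Sperner labelling $L$ of $\Delta^k$. Call an $(n-1)$-simplex $\langle\tau_0,\dots,\tau_{n-1}\rangle$ of $\Delta^k$ a \emph{door} if its labels are exactly $\{0,\dots,n-1\}$. We then count the pairs $(\rho,\delta)$ where $\rho$ is an $n$-simplex of $\Delta^k$ and $\delta$ is a door that is a face of $\rho$, and compute the parity of this count in two ways.

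Counting by $\rho$: a fully labelled $\rho$ has exactly one door face. A $\rho$ whose labels are exactly $\{0,\dots,n-1\}$, with one label repeated, has exactly two door faces. Every other $\rho$ has none. Hence the count is congruent to the number of fully labelled $n$-simplices modulo $2$.

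Counting by $\delta$: here we need the pseudomanifold property of the iterated barycentric subdivision. Every $(n-1)$-simplex of $\Delta^k$ lies in exactly two $n$-simplices if it is interior, and in exactly one if it lies on the boundary, that is, on some facet of $\Delta_n$. Because $L$ is Sperner, a door on the boundary lies on the $(n+1)$-th facet, the one with $(\sigma^*)_{n+1}=0$. The vertices of $\Delta^k$ on that facet form a copy of the $k$-th barycentric subdivision of $[n-1]$, and $L$ restricted to them is a Sperner labelling with values in $[n-1]$. Its fully labelled $(n-1)$-simplices are exactly the boundary doors. By induction on $n$ their number is odd; the base case $n=0$ is trivial, since there is one point with label $0$. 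So the count is odd, and the lemma follows.

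The main obstacle is the pseudomanifold property, together with the identification of a facet of $\Delta^k$ with $\Delta^k$ built over $[n-1]$. We would prove both by induction on $k$, working directly with chains. An $n$-simplex of $\Sd(P)$ is a maximal chain $\sigma_0<\dots<\sigma_n$. Removing $\sigma_j$ leaves a chain with a gap, and that gap can be refilled in at most one other way. Using the recursive structure of $\Delta^{k}$ (a flag in a simplex of $\Delta^{k-1}$), we show the refill exists exactly when the removed element is not forced by the boundary. Alternatively, and perhaps more cleanly, one can avoid the geometric pseudomanifold statement by inducting on $k$ alone. A fully labelled simplex of $\Delta^{k+1}$ lies inside a unique $n$-simplex of $\Delta^k$. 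Pulling the labelling back along a Sperner-type map then shows that the parity is preserved, which reduces everything to the one-step case of $\Sd$ of a single simplex. Either way, the proof consists only of decidable finite verifications, so it formalises in $\ELZero$.
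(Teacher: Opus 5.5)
Your proposal is correct and is essentially the proof the paper relies on. The paper only invokes the standard parity (door-counting) proof of Sperner's lemma by induction on $n$, citing de Longueville, and remarks that it is constructive and formalisable in $\ELZero$ because it involves only finite decidable objects. You spell out that same argument, including the pseudomanifold property of $\Delta^k$ by induction on $k$ and the identification of the $(n+1)$-th facet with the subdivision over $[n-1]$. Your suggested alternative, inducting on $k$ alone via a ``Sperner-type map'', is too vague to count as a proof as written, but you do not need it.
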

\begin{proof}
  The standard proof of Sperner's lemma, for example 
  \cite[Lemma 1.3]{CourseInTopologicalCombinatorics}, 
  is already constructive and can be carried out in $\ELZero$.
\end{proof}

Next, define a bounded tree $T_\Delta \subseteq \Seq$ as follows:
\begin{enumerate}
  \item For each $k \in \Nat$, we enumerate the elements of $\Delta^{k}$
    in terms of the reverse lexicographic order on the barycentric
    coordinate, which is defined as follows: for $\sigma, \tau \in \Delta^{k}
    $,
    \[
      \sigma < \tau \defeqiv
      \exists i \leq n 
      \Bigl(
        \forall j < i \bigl( (\sigma^{*})_{j+1} =
            (\tau^{*})_{j+1} \bigr) 
        \land
        (\sigma^{*})_{i+1} > (\tau^{*})_{i+1}
      \Bigr).
    \]
    Let $\Num{\sigma}$ denote the index of
    $\sigma \in \Delta^{k}$ under the above enumeration.
    Some examples for $n = 2$ and $k = 1$ are
    \begin{align*}
      \Num{\langle 0 \rangle} &= 0, 
      &
      \Num{\langle 0, 1 \rangle} &= 1, 
      &
      \Num{\langle 0, 2 \rangle} &= 2, 
      &
      \Num{\langle 0, 1, 2 \rangle} &= 3, 
      \\
      \Num{\langle 1 \rangle} &= 4, 
      &
      \Num{\langle 1,2 \rangle} &= 5, 
      &
      \Num{\langle 2 \rangle} &= 6. 
    \end{align*}

  \item An element of $T_\Delta$ is a finite sequence of the form
    $(\Num{\sigma_{0}},\dots, \Num{\sigma_{l}})$  where
    \begin{equation}
      \label{def:FiniteSeq}
      \forall i \leq l \left( \sigma_{i} \in \Delta^{i} \right) \land
      \forall i < l \left(  \sigma_{i} \in \sigma_{i+1} \right).
    \end{equation}
    Then, $T_{\Delta}$ is a bounded tree with a height-wise bounding function
    $f \colon \Nat \to \Nat$ given by 
    \[
      f(k) \defeql |\Delta^{k}|,
    \]
    where $|\Delta^{k}|$ denotes the number of elements of $\Delta^{k}$.
\end{enumerate}
In what follows, we identify $T_{\Delta}$ with $\Delta$. Specifically,
\begin{enumerate}
  \item a finite sequence $(\sigma_{0},\dots, \sigma_{l})$ satisfying \eqref{def:FiniteSeq}
         is identified with the element $(\Num{\sigma_{0}},\dots, \Num{\sigma_{l}})
         $ of $T_{\Delta}$;
  \item a \emph{path} in $T_\Delta$ is identified with an infinite sequence
    $\langle \sigma_{i} \rangle_{i \in \Nat}$ of elements
    of $\Delta$ such that 
    \[
      \forall i \in \Nat \left( \sigma_{i} \in \Delta^{i} \land
      \sigma_{i} \in \sigma_{i+1} \right).
    \]
\end{enumerate}

\begin{theorem}
  \label{thm:WKLImpBrouwer}
  $\WKL$ implies Brouwer's fixed-point theorem.
\end{theorem}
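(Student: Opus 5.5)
We assume $\WKL$, hence $\WKLf$ by \cite[Proposition 4.7]{FujiwaraDFT}. Fix $n$ and a uniformly continuous $f \colon \Delta_n \to \Delta_n$. The plan is to build a subtree $T \subseteq T_\Delta$ whose nodes are nested chains $\sigma_0 \in \sigma_1 \in \cdots \in \sigma_l$ that look ``approximately fixed'' at every level. We then show $T$ is infinite using Sperner's Lemma (Lemma~\ref{lem:Sperner}), take an infinite path by $\WKLf$, and read off a fixed point as the limit of the barycentres along the path.

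\textbf{Step 1: a decidable labelling.} Exact comparisons of reals are not decidable, so we work with rational approximations. For each $k$ and each $\sigma \in \Delta^k$, compute a rational approximation $q_\sigma$ of $f(\sigma^*)$ within $2^{-k}$. Define $L_k(\sigma)$ to be the least $i$ such that $(\sigma^*)_{i+1} > 0$ and $(q_\sigma)_{i+1} \leq (\sigma^*)_{i+1} + 2^{-k}$. Such an $i$ exists: the coordinates of both $f(\sigma^*)$ and $\sigma^*$ sum to $1$, so some coordinate with $(\sigma^*)_{i+1}>0$ satisfies $(f(\sigma^*))_{i+1} \le (\sigma^*)_{i+1}$. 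Moreover $(\sigma^*)_{i+1}>0$ is decidable, since $\sigma^*$ is rational. Thus $L_k$ is a Sperner labelling of $\Delta^k$.

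\textbf{Step 2: the tree and its infiniteness.} Let $T$ consist of those sequences $(\sigma_0,\dots,\sigma_l)$ in $T_\Delta$ such that for every $k \le l$ and every $\tau$ with a membership chain $\tau \in \cdots \in \sigma_k$ down from $\sigma_k$ at level $k$ (that is, $\tau$ is a vertex of $\sigma_k$ viewed in $\Delta^{k-1}$), the labels $L_{k-1}$ on the vertices of $\sigma_k$ cover $[n]$. In other words, each $\sigma_k$ is a fully labelled $n$-simplex of $\Delta^{k-1}$, and we restrict to $\sigma_k$ that are maximal chains. $T$ is decidable and bounded by the height-wise bounding function of $T_\Delta$. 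The obstacle is closure under initial segments together with nesting. A fully labelled simplex at level $k$ need not lie inside one at level $k-1$ under the same labelling. To repair this, we loosen the condition to ``approximately fully labelled'': $\sigma_k$ lies within $\mathsf{mesh}(\Delta^{k-1}) + \delta_k$ of a point where every coordinate inequality $(f(x))_{i+1} \le x_{i+1} + \varepsilon_k$ holds. We choose $\varepsilon_k \to 0$ slowly enough, using the modulus of uniform continuity of $f$ and the mesh bound $\mathsf{mesh}(\Delta^k) \le (n/(n+1))^k\sqrt 2$, that the property at level $k$ implies it at all coarser levels for the ancestor simplex.

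For infiniteness at height $l$, apply Lemma~\ref{lem:Sperner} to $L_l$ to get a fully labelled simplex $\tau$ at level $l$. Choose vertices along membership chains to obtain ancestors $\sigma_k \ni \cdots$; each ancestor contains $\tau^*$ in its closure. Uniform continuity then gives, at $\tau^*$, $(f(\tau^*))_{i+1} \le (\tau^*)_{i+1} + \varepsilon$ for all $i$, which forces every ancestor to satisfy the loosened condition. I expect this calibration of tolerances against the modulus to be the main technical step.

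\textbf{Step 3: extracting the fixed point.} By $\WKLf$, take a path $\langle\sigma_k\rangle$. The points $\sigma_k^*$ form a Cauchy sequence, since $\sigma_{k+1}^*$ lies in the simplex spanned by $\sigma_k$, whose diameter is at most $\mathsf{mesh}(\Delta^{k})$. Let $x$ be its limit. Passing to the limit in the inequalities $(f(\sigma_k^*))_{i+1} \le (\sigma_k^*)_{i+1} + \varepsilon_k'$ gives $(f(x))_{i+1} \le x_{i+1}$ for all $i$. Both sides sum to $1$, so all these inequalities are equalities and $f(x)=x$.
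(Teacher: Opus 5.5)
Your overall architecture matches the paper's: Sperner's lemma gives approximate fixed points at every level, you take a bounded subtree of $T_\Delta$, apply $\WKLf$, and pass to a Cauchy limit. Your direct labelling in Step~1 and your endgame in Step~3 (coordinatewise $f(x)_{i+1}\le x_{i+1}$ together with both coordinate sums being $1$) are legitimate substitutes for the paper's contradiction argument with $\lambda$ and its estimate on $\Dist{f(x)}{x}$.

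The gap is in Step~2, which is where the constructive content lies. A tree must be a detachable subset of $\Seq$. Your loosened condition, ``$\sigma_k$ lies within $\mesh(\Delta^{k-1})+\delta_k$ of a point $x$ with $f(x)_{i+1}\le x_{i+1}+\varepsilon_k$ for all $i$'', is an existential over points of $\Delta_n$ of non-strict inequalities between reals. It is not decidable in $\ELZero$, so $\WKLf$ cannot be applied to your $T$. The calibration you defer as ``the main technical step'' is precisely what has to be built around a decidable test.

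The monotonicity framing (level $k$ implies coarser levels) is also beside the point. A set of nodes defined by imposing a condition at every level $k\le h$ is automatically closed under initial segments, so what you need is decidability plus infiniteness.

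Two geometric claims are false as stated. A descending ancestor $\sigma_k$ need not contain $\tau^*$ in its closure (e.g.\ when $\sigma_k$ is a singleton chain), and $\sigma_{k+1}^*$ need not lie in the simplex spanned by $\sigma_k$. What holds, and what the paper uses, is this: along $\sigma_k\in\sigma_{k+1}\in\cdots\in\sigma_l$, the point $\sigma_l^*$ lies in a simplex of the $k$-th subdivision having $\sigma_k^*$ as a vertex. Hence $\Dist{\sigma_k^*}{\sigma_l^*}\le\mesh(\Delta^k)$, with the mesh taken over comparable elements of $\Delta^k$.

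A repair within your framework is to test $\sigma_k^*$ itself against your rational approximation $q_{\sigma_k}$.
\begin{enumerate}
\item Let $\eta_k\to 0$ be a rational bound, read off from the modulus, on $\Dist{f(u)}{f(v)}$ whenever $\Dist{u}{v}\le\mesh(\Delta^k)$. Put $c_k=\mesh(\Delta^k)+\eta_k+2^{1-k}$.
\item Let $(\sigma_0,\dots,\sigma_h)\in T$ iff $(q_{\sigma_k})_{i+1}\le(\sigma_k^*)_{i+1}+c_k$ for all $k\le h$ and $i\le n$. This is a decidable, bounded subtree of $T_\Delta$.
\item For height $h$, apply Sperner's lemma to $L_l$ with $l\ge h$ so large that $\varepsilon(l)=\mesh(\Delta^l)+2^{1-l}+\eta_l\le 2^{-h}$. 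Any vertex $y=\tau_0^*$ of the fully labelled simplex satisfies $f(y)_{i+1}\le y_{i+1}+\varepsilon(l)$ for all $i$.
\item Descending from $\tau_0$ by membership yields $\sigma_k$ with $\Dist{\sigma_k^*}{y}\le\mesh(\Delta^k)$, so each $\sigma_k$ passes its test.
\item A path then satisfies $f(\sigma_k^*)_{i+1}\le(\sigma_k^*)_{i+1}+c_k+2^{-k}$, and your Step~3 finishes the proof.
\end{enumerate}

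Separately, the existence claim in Step~1 should be argued through the rational test, not by picking $i$ with $f(\sigma^*)_{i+1}\le(\sigma^*)_{i+1}$ exactly, which is not constructively available. Over the $i$ with $(\sigma^*)_{i+1}>0$, the minimum of the rationals $(q_\sigma)_{i+1}-(\sigma^*)_{i+1}$ is at most their average, which is at most $2^{-k}$.

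Compared with the paper, which takes the downward closure $U$ of the sets $U_k$ and so avoids levelwise tolerances, this levelwise tree is manifestly detachable. Membership in a downward closure like $U$ a priori requires a search over all $k$.
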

\begin{proof}
  Fix the dimension $n \in \Nat$.
  Let $f \colon \Delta_{n} \to \Delta_{n}$ be a uniformly continuous
  function, and let $\omega$ be a modulus of uniform continuity of $f$.
  We may assume that $\omega(k) \geq k$ for each $k \in \Nat$.
  Without loss of generality, we use the max norm of $\Real^{n+1}$.
  Thus, for $x, y \in \Real^{n+1}$,
  \[
    \Dist{x}{y} \defeql \max\left\{ |x_i - y_i| \mid i \leq n+1
    \right\}.
  \]

  For each $k \in \Nat$, let $k' \in \Nat$ be the least number such that
  \begin{equation}
    \label{eq:kPrime}
    2n \cdot 2^{-k'} \leq 2^{-k}.
  \end{equation}
  Let $l(k)$ be the least number such that 
  \begin{equation}
    \label{eq:lk}
    \mesh(\Delta^{l(k)}) \leq 2^{-\omega(k'+1)},
  \end{equation}
  where
  $
  \mesh(\Delta^{i})
  \defeql \max\left\{ \Dist{\sigma^{*}}{\tau^{*}} \mid \sigma,\tau
  \in \Delta^{i} \right\}
  $
  for each $i \in \Nat$.
  Such $l(k)$ exists since 
  $\mesh(\Delta^{i})$ is bounded by $\big(\frac{n}{n+1}\big)^{i}$
  (see e.g., \cite[Appendix I]{VickHomology}).
  Then, for any pair of neighbouring elements $\sigma,\tau$
  of $\Delta^{l(k)}$,%
  \footnote{This means that $\sigma$ and $\tau$ are comparable as
  elements of the ordered set $\Delta^{l(k)}$.}
  \begin{align}
    \label{eq:neighbour}
    \begin{aligned}
      &\Dist{\sigma^{*}}{\tau^{*}} \leq \mesh(\Delta^{l(k)})
            \leq 2^{-\omega(k'+1)} \leq 2^{-(k'+1)},\\
            &\Dist{f(\sigma^{*})}{f(\tau^{*})} \leq  2^{-(k'+1)},
    \end{aligned}
  \end{align}
  where the latter follows from the uniform continuity of $f$.

  By $\qfAC$, there exists a function 
  $\lambda \colon  \sum_{k \in \Nat} \Delta^{l(k)} \to \Two$ such that
  \begin{align*}
    \lambda(k,\sigma) = 0 &\imp
    \Dist{\sigma^{*}}{f(\sigma^{*})} > n \cdot 2^{-k'}, \\
    \lambda(k,\sigma) = 1 &\imp
    \Dist{\sigma^{*}}{f(\sigma^{*})} < 2n \cdot 2^{-k'}
  \end{align*}
  for $k \in \Nat$ and $\sigma \in \Delta^{l(k)}$.

  Fix $k \in \Nat$, and suppose $\lambda(k,\sigma) = 0$
  for all $\sigma \in \Delta^{l(k)}$. For each $\sigma \in
  \Delta^{l(k)}$, since $\sigma,f(\sigma) \in \Delta_{n}$, we have
  \begin{equation}
    \label{eq:OnSimplex}
    \sum_{i = 0}^{n}\left(  (\sigma^{*})_{i+1} - f(\sigma^{*})_{i+1} \right) 
    = 
    \sum_{i = 0}^{n} (\sigma^{*})_{i+1} - \sum_{i = 0}^{n}
    f(\sigma^{*})_{i+1}
    = 1 - 1 = 0.
  \end{equation}
  Since 
  $\Dist{\sigma^{*}}{f(\sigma^{*})} > n \cdot 2^{-k'}$,
  there exists $i \leq n$ such that 
  \[
  (\sigma^{*})_{i+1} - f(\sigma^{*})_{i+1} > 2^{-k'}.
  \]
  Hence, by finite $\qfAC$, we obtain a labelling $L \colon
  \Delta^{l(k)} \to [n]$ of elements of $\Delta^{l(k)}$ such that
  \[
    (\sigma^{*})_{L(\sigma)+1} - f(\sigma^{*})_{L(\sigma)+1} > 2^{-k'}
  \]
  for each $\sigma \in \Delta^{l(k)}$.
  Note that if $\sigma$ is on the $(i+1)$-th facet of $\Delta_{n}$
  for some $i \in [n]$, then $(\sigma^{*})_{i+1} = 0$, and so
  \[
    L(\sigma) \neq i.
  \]
  Thus, $L$ is a Sperner labelling of $\Delta^{l(k)}$. By 
  Sperner's lemma (Lemma \ref{lem:Sperner}), there exists a fully labelled
  $n$-simplex $\mu_{0}, \dots, \mu_{n}$ in
  $\Delta^{l(k)}$.%
  \footnote{This means that they form a chain $\mu_{0} < \cdots <
  \mu_{n}$ in $\Delta^{l(k)}$.}
  Without loss of generality, we may assume $L(\mu_{i}) = i$, i.e.,
  \begin{equation}
    \label{eq:Positive}
    (\mu_{i}^{*})_{i+1} - f(\mu_{i}^{*})_{i+1} > 2^{-k'}
  \end{equation}
  for each $i \in [n]$. Fix $i \in [n]$. By \eqref{eq:OnSimplex}
  and \eqref{eq:Positive}, there exists $j \in [n]$ such that 
  \[
    (\mu_{i}^{*})_{j+1} - f(\mu_{i}^{*})_{j+1} < 0.
  \]
  Since $\mu_{i}$
  and $\mu_{j}$ are neighbouring in $\Delta^{l(k)}$, they satisfy \eqref{eq:neighbour}.
  Thus
  \begin{align*}
    (\mu_{j}^{*})_{j+1} - f(\mu_j^{*})_{j+1}
    &= 
    \left[ (\mu_{j}^{*})_{j+1} - (\mu_{i}^{*})_{j+1} \right]
    +
    \left[ (\mu_{i}^{*})_{j+1} -  f(\mu_i^{*})_{j+1} \right]
    + \left[ f(\mu_i^{*})_{j+1} -  f(\mu_j^{*})_{j+1}\right]
    \\
    &< 2^{-(k'+1)} + 2^{-(k'+1)} = 2^{-k'},
  \end{align*}
  contradicting $L(\mu_{j}) = j$ (see \eqref{eq:Positive}). 

  Hence, for each $k \in \Nat$, there exists $\sigma \in
  \Delta^{l(k)}$ such that $\lambda(k,\sigma) = 1$. 
  By \eqref{eq:kPrime}, such $\sigma$ (or, more precisely, $\sigma^{*}$)
  witnesses an approximate fixed-point of $f$ with 
  respect to $2^{-k}$.

  To obtain an exact fixed-point of $f$,
  for each $k \in \Nat$, define
  \[
    U_{k} \defeql 
    \left\{ a \in T_{\Delta}
      \mid
    |a| = l(k) + 1 \land \lambda(k,a_{l(k)}) = 1 \right\}.
  \]
  Note that $U_k$ is inhabited for each $k \in \Nat$. Let 
  \[
    U \defeql
    \bigcup_{k \in \Nat}
    \left\{ a \in T_\Delta 
      \mid \exists b \in U_k \left( a \preceq b \right)\right\}.
  \]
  Then, $U$ is an infinite subtree of $T_\Delta$.
  Hence, by $\WKLf$, there exists an infinite path $\alpha = \langle
  \sigma_{i} \rangle_{i \in \Nat}$ in $U$. 
  Thus, for each $i \in \Nat$, there exists $k \in \Nat, a \in
  U_k$, and a finite chain $\sigma_{i} \in \cdots \in a_{l(k)}$.
  By $\qfAC$, there exist sequences
  $\langle k_{i} \rangle_{i \in \Nat}$ and $\langle a^i \rangle_{i \in \Nat}$ 
  such that for each $i \in \Nat$, we have $i \leq k_{i}$, $a^i \in U_{k_i}$ and 
  there exists a chain $\sigma_{i} \in \cdots \in a^i_{l(k_i)}$.
  Then for each $i,m\in \Nat$, since
  \[
    \sigma_{i} \in \dots \in \sigma_{i+m}
    \in \cdots \in a^{i+m}_{l(k_{i+m})}, 
  \]
  we have
  \begin{align*}
    \Dist{(\sigma_{i})^{*}}{(a^{i}_{l(k_i)})^{*}} \leq \mesh(\Delta^{i}), \\
    \Dist{(\sigma_{i})^{*}}{(a^{i+m}_{l(k_{i+m})})^{*}} \leq \mesh(\Delta^{i}).
  \end{align*}
  Hence,
  \[
    \Dist{(a^{i}_{l(k_i)})^{*}}{(a^{i+m}_{l(k_{i+m})})^{*}} 
    \leq \mesh(\Delta^{i}) + \mesh(\Delta^{i}).
  \]
  Since $\mesh(\Delta^{i}) \to 0$ as $i \to \infty$, 
  the sequence
  $\langle (a^{i}_{l(k_i)})^{*} \rangle_{i \in \Nat}$ is a Cauchy sequence, 
  and hence converges to some $x \in
  \Delta_{n}$ by the completeness of $\Delta_{n}$. Since $f$ is
  continuous, the sequence $\langle f( (a^{i}_{l(k_i)})^{*}) \rangle_{i \in \Nat}$ 
  converges to $f(x) \in \Delta_{n}$. 
  Since $\Dist{f((a^{i}_{l(k_i)})^{*})}{(a^{i}_{l(k_i)})^{*}} < 2^{-k_{i}} \leq 2^{-i}$
  by the definition of $U_{k_i}$, we have $f(x) = x$.
\end{proof}
\begin{remark}
  The first part of the proof of Theorem~\ref{thm:WKLImpBrouwer}
  establishes Brouwer's approximate fixed-point theorem
  in $\ELZero$.
  For the proof of Brouwer's approximate fixed-point theorem
  and Brouwer's fixed-point theorem in $\BISH$ (with the latter assuming $\LLPO$),
  see \cite[Section 2.2]{Hendtlass2012FixedPT}.
\end{remark}

\section{Brouwer's fixed-point theorem implies \texorpdfstring{$\WKL$}{WKL}}
\label{sec:BrouwerImpWKL}
For the purpose of deriving $\WKL$ from Brouwer's fixed-point
theorem, it suffices to use the two-dimensional instance of the theorem:
\begin{quote}
  Any uniformly continuous function on $\UInt \times \UInt$ has a fixed-point.
\end{quote}
\begin{remark}
  \label{rm:IVTBE}
  The one-dimensional instance of Brouwer's fixed-point theorem
  implies the \emph{intermediate value theorem} ($\IVT$), which states
  \begin{quote}
    If $f \colon \UInt \to \Real$ is a uniformly continuous
    function with $f(0) < 0 < f(1)$, then there exists $x \in \UInt$
    such that $f(x) = 0$.
  \end{quote}
  Berger et al.\ \cite{IshiharaEtalBinaryExpansion} showed that $\IVT$
  implies the \emph{lesser limited principle of omniscience} ($\LLPO$)
  and the \emph{binary expansion of real numbers in the unit interval}
  ($\BE$).
  Here, $\LLPO$ is the principle
  \[
    \forall \alpha,\beta \in \BTree
    \Bigl( 
      \neg \bigl( \exists n \left( \alpha(n) \neq 0 \right) 
                  \wedge
                  \exists n \left( \beta(n) \neq 0 \right)
           \bigr)
      \imp
      \neg \exists n  \left( \alpha(n) \neq 0 \right) 
                  \lor
      \neg \exists n  \left( \beta(n) \neq 0 \right) 
    \Bigr),
  \]
  and $\BE$ is the statement
  \begin{quote}
    For every real number $x \in \UInt$, there exists
    $\alpha \in \BTree$ such that
    $x = \sum_{i = 0}^{\infty} 2^{-(i+1)}\alpha(i)$.
  \end{quote}
  Hence, the two-dimensional instance of Brouwer's fixed-point theorem
  used in the proof of the following theorem suffices to derive both $\LLPO$ and $\BE$.
\end{remark}

\begin{theorem}
  \label{thm:BrouwerImpWKL}
  Brouwer's fixed-point theorem implies $\WKL$.
\end{theorem}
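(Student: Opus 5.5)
The plan is to derive $\WKLf$ for binary trees, i.e.\ $\WKL$ itself, directly from the two-dimensional instance of Brouwer's fixed-point theorem. Routing through $\LLPO$ would require the full countable choice used in \cite{IshiharaLLPOWKLHahn}, which $\ELZero$ lacks, so that route is not available here. Fix an infinite binary tree $T$. I would construct, uniformly in $T$, a uniformly continuous $f\colon \UInt\times\UInt\to\UInt\times\UInt$ whose fixed points lie only on the top edge $y=1$, above a point $x$ of a Cantor-type set encoding an infinite path of $T$. An infinite path is then read off from any fixed point $(x,1)$ by approximating $x$.

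\textbf{Geometry.} Assign to each $a\in\Bin$ a closed interval $I_a\subseteq\UInt$ by the usual middle-thirds construction, so that $I_{a*\langle 0\rangle}$ and $I_{a*\langle 1\rangle}$ are separated by an open gap of length $3^{-|a|-1}$. Slice the square into horizontal strips $S_n=\UInt\times[1-2^{-n},1-2^{-n-1}]$. Write $g(z)=f(z)-z$ for the displacement field. I would define $g$ piecewise linearly on the pieces $(I_a\times S_n)$ with $|a|=n$, and on the gap regions, so that the following hold.
\begin{enumerate}
\item On strip $S_n$, $|g|\le C\cdot 2^{-n}$. Hence $f$ extends continuously to $y=1$ with $g=0$ there, and uniform continuity follows from a modulus computable from $n$ alone.
\item On gap regions, and on $I_a\times S_n$ with $a\in T$, $g$ has a vertical component pointing upward of size $\asymp 2^{-n}$ together with a horizontal component pushing points out of the gaps. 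So no fixed point occurs there, and near $y=1$ the horizontal component keeps the relevant points inside $\bigcup_{|a|=n}I_a$.
\item Above a node $a\notin T$, i.e.\ on $I_a\times\bigcup_{m\ge|a|}S_m$, $g$ must be nowhere zero, \emph{including in the limit at $y=1$}. Equivalently, the displacement there should not shrink to zero but should stay bounded below by a quantity depending only on the level where $a$ left $T$.
\end{enumerate}
Only the finite set $T\cap\{0,1\}^{n}$ is consulted when defining $f$ on $S_n$, so the whole construction is primitive recursive in $T$, as $\ELZero$ requires.

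\textbf{Extracting a path.} Given a fixed point $z=(x,y)$, first $y=1$, since otherwise $z$ lies in some $S_n$, where $g\neq0$ by (2) and (3). Moreover, $x$ lies outside every gap and above no dead node, by the horizontal push in (2) and the nonvanishing in (3). To decide the bit at level $n$, compare a rational approximation of $x$, within $3^{-n-2}$, against the two intervals $I_{b*\langle 0\rangle}$ and $I_{b*\langle 1\rangle}$. This comparison is decidable because the gap between them has width $3^{-n-1}$ and $x$ does not lie in it. Defining $\alpha$ this way by $\qfAC$, each $\overline{\alpha}n$ lies in $T$: otherwise $x\in I_{\overline{\alpha}n}$ with $\overline{\alpha}n\notin T$, contradicting (3). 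That $T$ is infinite is used to guarantee that the live pieces in (2) glue continuously across each strip.

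\textbf{Main obstacle.} The hard step is (3), reconciled with continuity. Above a dead node, $g$ must be nonvanishing all the way up to $y=1$. At the same time it must agree on the boundary with the small upward fields over live neighbours and on the gaps, and it must not "know" which sibling survives, since that is undecidable. My first attempt would adapt Orevkov's rotating field. On the dead region $I_a\times[1-2^{-|a|},1]$, which is a rectangle, I would let $g$ wind once around the boundary. It points up at the bottom edge. Along the sides it points outward, into the adjacent gaps, where the gap field already points away from the gap and so matches. At the top edge it points horizontally. Its interior is filled by a radial field about a point taken outside the rectangle, so that it has no zeros. The delicate part is making the winding numbers compatible across the sides, so that no index is forced inside. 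I would check this first for a single dead node and then observe that the pieces for distinct dead nodes have disjoint interiors. Finally, I would verify that $|g|$ along $y=1$ being bounded away from $0$ only above dead intervals does not break continuity at the Cantor points that carry live paths.
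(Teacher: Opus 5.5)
There is a genuine gap, exactly at the step you defer as the ``main obstacle'', and the design cannot be completed.

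Conditions (1) and (3) are already incompatible as stated. Condition (1) bounds $|g|$ by $C\cdot 2^{-n}$ on the whole strip $S_n$, so $g\to 0$ uniformly as $y\to 1$. Condition (3) demands a fixed positive lower bound for $|g|$ on $I_a\times S_n$ for every $n\ge |a|$.

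More seriously, the boundary behaviour you prescribe forces a zero of $g$ in the rectangle $R_a=I_a\times[1-2^{-|a|},1]$ above any node $a$ that has just left $T$:
\begin{itemize}
\item On the bottom edge, $g_2\ge 0$, since $g$ must agree with the upward field of the live piece over the parent of $a$.
\item On the top edge, $g_2\le 0$, since $f$ maps into $U$.
\item On the vertical sides, the gap fields push out of the gaps, i.e.\ into $I_a$. So $g_1\ge 0$ on the left side and $g_1\le 0$ on the right side. The same signs are forced by $f(U)\subseteq U$ if a side lies on $x=0$ or $x=1$.
\end{itemize}
By the Poincar\'e--Miranda theorem, $g$ vanishes somewhere in $R_a$, so $f$ has a fixed point above a dead node. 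Your proposed remedy does not avoid this. A dead field that ``points outward, into the adjacent gaps'' and a gap field that ``points away from the gap'' point in opposite horizontal directions on their common side, so they do not match.

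No redesign along these lines can work, because the goal itself is unattainable. Suppose $f$ is computable from $T$ (yours is primitive recursive in $T$) and all fixed points of $f$ lie in $P\times\{1\}$, where $P$ is the set of Cantor points coding paths of $T$. This is exactly what your extraction step relies on.
\begin{itemize}
\item The columns over the gaps then contain no fixed points. So for each node $a$, the fixed-point index of the part of $\mathrm{Fix}(f)$ over $I_a$ is the winding number of $z-f(\rho(z))$, where $\rho$ is the nearest-point retraction of $\Real^2$ onto $U$. The contour is a rectangle whose vertical sides run through the gaps adjacent to $I_a$ and whose top edge lies above $y=1$.
\item This field is bounded away from $0$ on such a contour, so the integer is found by a terminating search.
\item Indices are additive over the two children, and the root has index $1$. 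A nonzero index yields a fixed point over $I_a$, hence an infinite path through $a$.
\end{itemize}
Choosing at each level a child of nonzero index therefore computes an infinite path of $T$, which is impossible for Kleene's tree. Symptomatically, for a finite tree your rules would describe a fixed-point-free self-map of the square.

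The paper avoids this obstruction differently. It retracts the products $\overline C_n\times\overline C_n$ onto $\partial U$, following Veldman, and composes with the rotation $R$. The fixed points of $h$ then lie in the connected set of points at least one of whose coordinates codes a path, where no such splitting argument is available. The path is then extracted using $\BE$ and a single instance of $\LLPO$, both obtained from the one-dimensional case of Brouwer's theorem via $\IVT$. The paper never uses $\LLPO\imp\WKL$, so your reason for excluding $\LLPO$ does not apply.
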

\begin{proof}
  Assume Brouwer's fixed-point theorem, and let $T$
  be an infinite binary tree.
  In what follows, we write $U$ for $\UInt \times \UInt$.

  For each $a \in \Bin$, we associate a dyadic rational interval
  $S_a=[l_a,r_a]$ by 
  \[
    l_{a} \defeql \sum_{i = 0}^{|a|-1} 2^{-(i+1)} a_i, 
    \qquad
    r_a \defeql l_a+2^{-|a|}.
  \]
  For each $n \in \Nat$, let
  \[
    C_n
    \defeql
    \left\{ a \in \Bin \mid
      \lth{a} \leq n \land
      a \notin T \land 
      \forall i < \lth{a} \left(\overline{a}i \in T \right)
    \right\}.
  \]
  That is, $C_n$ is the set of those finite
  binary sequences of length $\leq n$
  that have just left the tree $T$.
  Then, define $\overline C_n \subseteq U$ by
  \[
    \overline C_n \defeql \bigcup \left\{ S_a\mid a\in C_n  \right\}.
  \]
  Note that 
  \[ 
    m \le n \rightarrow \overline{C_m} \subseteq \overline{C_n}
  \]
  for all $m,n \in \Nat$. Moreover, the fact that $T$ is infinite is
  equivalent to
  \[
    \forall n \in \Nat \exists a \in \Bin
    \left( |a| = n \land
    \forall i \leq \lth{a} \left(\overline{a}i \notin C_i \right)
    \right).
  \]
  Then, as described in the proof of \cite[Theorem 7]{Veldman2009},
  one can construct by induction on $n\in \Nat$ a sequence of
  uniformly continuous functions $f_n \colon \overline C_n \times
  \overline C_n \to U \; (n \in \Nat)$ such that
  \begin{enumerate}
    \item $f_{n+1}$ is an extension of $f_n$,

    \item 
      $
        \operatorname{Im}(f_n)
        % =
        % \left\{  f_n(z) \mid z \in \overline C_n \times \overline C_n  \right\}
        \subseteq
        \partial U,
      $
      where $\partial U$ is the boundary of $U$, and

    \item $f_n$ fixes $\partial U$, i.e.,
      $
      \forall z \in \partial U  \cap \left( \overline C_n \times \overline
      C_n  \right)  \left(f_n(z)=z  \right).
      $
  \end{enumerate}
  For the details of the construction of such a sequence of uniformly
  continuous functions, see \cite[Lemma 4 and Theorem 7]{Veldman2009}
  or \cite[IV.7.7]{SimpsonBook}. The rest of our proof depends only on
  the above properties 1--3 of $f_n$, and not on any particular
  construction of the functions $f_n$.

  We will construct a uniformly continuous function on $U$ whose
  fixed points lie outside $\overline C_n \times \overline C_n$ for
  every $n \in \Nat$. By the binary expansion, a fixed-point of such a
  function should give rise to a binary sequence whose
  initial segments avoid every $C_n$, and hence determines an infinite
  path in $T$.

  To this end, we first extend each $f_n \;(n \in \Nat)$ to a
  uniformly continuous function $\overline f_n \colon U \to U$ that
  fixes $\partial U$ as follows:
  \begin{enumerate}

    \item Subdivide $U$ into $2^{n} \times 2^{n}$
      squares so that $U$ is divided into a chess board of size $2^n$.

    \item Triangulate each square in the $2^{n}\times 2^{n}$ grid into
      four triangles by connecting the center of the square with the
      four vertices of the square.  This gives a triangulation of $U$.
      See Figure \ref{fig:ChessBoard}.
      \begin{figure}[tb]
  \centering
\begin{tikzpicture}[scale=1.2]
  %
  % Chess board
  %
  \draw (0,0) -- (4,0);
  \draw (0,1) -- (4,1);
  \draw (0,2) -- (4,2);
  \draw (0,3) -- (4,3);
  \draw (0,4) -- (4,4);
  \draw (0,0) -- (0,4);
  \draw (1,0) -- (1,4);
  \draw (2,0) -- (2,4);
  \draw (3,0) -- (3,4);
  \draw (4,0) -- (4,4);
  %
  % Triangulation
  %
  \draw (0,0) -- (1,1);
  \draw (1,0) -- (2,1);
  \draw (2,0) -- (3,1);
  \draw (3,0) -- (4,1);
  \draw (0,1) -- (1,0);
  \draw (1,1) -- (2,0);
  \draw (2,1) -- (3,0);
  \draw (3,1) -- (4,0);
  \draw (0,1) -- (1,2);
  \draw (1,1) -- (2,2);
  \draw (2,1) -- (3,2);
  \draw (3,1) -- (4,2);
  \draw (0,2) -- (1,1);
  \draw (1,2) -- (2,1);
  \draw (2,2) -- (3,1);
  \draw (3,2) -- (4,1);
  \draw (0,2) -- (1,3);
  \draw (1,2) -- (2,3);
  \draw (2,2) -- (3,3);
  \draw (3,2) -- (4,3);
  \draw (0,3) -- (1,2);
  \draw (1,3) -- (2,2);
  \draw (2,3) -- (3,2);
  \draw (3,3) -- (4,2);
  \draw (0,3) -- (1,4);
  \draw (1,3) -- (2,4);
  \draw (2,3) -- (3,4);
  \draw (3,3) -- (4,4);
  \draw (0,4) -- (1,3);
  \draw (1,4) -- (2,3);
  \draw (2,4) -- (3,3);
  \draw (3,4) -- (4,3);
  % 
  % Labeling
  % 
  \draw [] (0,0) node[left] {$(0,0)$};
  \draw [] (4,0) node[right] {$(1,0)$};
  \draw [] (0,4) node[left] {$(0,1)$};
  \draw [] (4,4) node[right] {$(1,1)$};
  %
  % C_n
  %
  \fill [pattern=dots, line width=0pt]
  (0,0) 
  -- 
  (1,0) 
  -- 
  (1,1) 
  -- 
  (0,1); 
  \fill [pattern=dots, line width=0pt]
  (2,0) 
  -- 
  (3,0) 
  -- 
  (3,1) 
  -- 
  (2,1); 
  \fill [pattern=dots, line width=0pt]
  (0,2) 
  -- 
  (1,2) 
  -- 
  (1,3) 
  -- 
  (0,3); 
  \fill [pattern=dots, line width=0pt]
  (2,2) 
  -- 
  (3,2) 
  -- 
  (3,3) 
  -- 
  (2,3); 
  %
  % Three types of triangle
  %
  % (a)
  \draw [] (0.8,2.5) node[inner sep=1.6pt, fill=white] {$a$};
  %
  % (b)
  \draw [] (1.20,2.5) node[inner sep=1pt, fill=white] {$b$};
  %
  % (c)
  \draw [] (1.5,2.20) node[inner sep=1pt, fill=white] {$c$};
\end{tikzpicture}
\caption{The triangulation of $U$ for $n = 2$. The dotted region
represents $\overline C_n \times \overline C_n$  where
$C_n = \left\{ \langle 0,0 \rangle, \langle 1,0 \rangle \right\}$.
The triangles labelled $a$, $b$, and $c$, in this order, are examples
of the three types of triangles distinguished in step \ref{Step4}.
}
\label{fig:ChessBoard}
\end{figure}

    \item Define $\overline f_n$ on each vertex $u$ of this triangulation by
      \[
        \overline f_n(u)
        \defeql
        \begin{cases}
          f_n(u) & \text{if $u \in \overline C_n \times \overline C_n$},\\
          u      & \text{otherwise.}
        \end{cases}
      \]

    \item\label{Step4} Define $\overline f_n$ on each triangle $t$ of the
      triangulation using the value of $\overline f_n$ on the vertices
      of a triangle, depending on the following three cases:
      \begin{enumerate}
        \item\label{Case1} $t$ is contained in $\overline C_n \times \overline C_n$;
        \item\label{Case2} only the hypotenuse of $t$ is in $\overline C_n \times \overline C_n$;
        \item\label{Case3} otherwise.
      \end{enumerate}
  \end{enumerate}
  In all cases \eqref{Case1}--\eqref{Case3}, it suffices to define
  $\overline f_n$ on rational points
  $z \in t \cap \left( \Rat \times \Rat\right)$,
  since they are dense in $U$.

  For the case \eqref{Case1}, define $\overline f_n$ on $t$ by
  \[
    \overline f_n(z) \defeql f_n(z).
  \]
  
  For the cases \eqref{Case2} and \eqref{Case3}, let $u, v,w$ be the
  vertices of $t$. Then each rational point $z \in t$ can be
  expressed uniquely as
  \[
    \left\{
      \begin{aligned}
        &z = a_z u+b_z v+c_z w,\\
        &a_z+b_z+c_z =1,\\
        &a_z \ge 0,\; b_z\ge 0,\; c_z\ge 0,
      \end{aligned}
    \right.
  \]
  where $a_z,b_z,c_z \in \Rat$.
  Then, for the case \eqref{Case2},
  let us assume that $vw$ forms the hypotenuse of $t$, which is in
  $\overline C_n\times\overline C_n$ by the assumption.
  For each rational point $z\in t$, define $\overline f_n(z)$ by
  \[
    \overline f_n(z)
     \defeql
     \begin{cases}
       \overline f_n(u), & \text{if $a_z=1$}, \\[2ex]
       a_z \overline f_n(u)
        + (b_z+c_z) f_n\!\left( \frac{b_z v + c_z w}{b_z + c_z} \right),
                         & \text{otherwise}.
     \end{cases}
   \]
   Lastly, for the case \eqref{Case3},
   define $\overline f_n(z)$ by
   \[
     \overline f_n(z)
     \defeql
     a_z\overline f_n(u) + b_z\overline f_n(v) + c_z\overline f_n(w).
   \]
   Since the functions $\overline f_n$ defined on adjacent triangles
   coincide on their common boundary, we can glue them together to
   form a uniformly continuous function $\overline f_n$ on the entire
   $U$. Then it is clear that $\overline f_n$ extends $f_n$ and fixes
   $\partial U$.

   Now, define a uniformly continuous function $f \colon U \to U$ by 
   \begin{equation}
     \label{def:f}
     f \defeql \lim_{n\to\infty} \frac{1}{n+1}\left( \sum_{i=0}^{n}\overline f_i \right).
   \end{equation}
   Note that $f(\overline C_n \times \overline C_n) \subseteq \partial
   U$ for all $n \in \Nat$, and that $f$ fixes $\partial U$.
   Finally, define $h \colon U \to U$ by
   \begin{equation}
     \label{def:h}
     h \defeql R \circ f,
   \end{equation}
   where $R \colon U \to U$ denotes the $90^\circ$ rotation about the center of $U$.
   Since $f(\overline C_n \times \overline C_n) \subseteq \partial
   U$ for all $n \in \Nat$ and $R$ has no fixed point on $\partial U$,
   we have
   \begin{equation}
     \label{eq:NonFixed}
     \forall z \in \bigcup_{n \in \Nat} \overline C_n \times \overline C_n
     \left(  h(z) \neq z\right).
   \end{equation}
   By Brouwer's fixed-point theorem, $h$ has a fixed-point $(x,y) \in U$.
   By the binary expansion (cf.\ Remark \ref{rm:IVTBE}), there exist
   $\alpha,\beta \in \BTree$ such that
   $
   x = \sum_{i=0}^{\infty}2^{-(i+1)}\alpha(i),
   $
   and
   $
   y = \sum_{i=0}^{\infty}2^{-(i+1)}\beta(i).
   $
   If there exists $n \in \Nat$ such that $\overline{\alpha}n \notin T$
   and $\overline{\beta}n \notin T$, then
   $(x,y) \in \overline C_n \times \overline C_n$ so that
   $h(x,y) \neq (x,y)$ by \eqref{eq:NonFixed}, a contradiction.
   Thus, by $\LLPO$ (cf.\ Remark \ref{rm:IVTBE}),
   either $\forall n\in \Nat \left( \overline{\alpha}n \in T \right)$
   or $\forall n\in \Nat \left( \overline{\beta}n \in T \right)$.
   In either case, $T$ has an infinite path.
\end{proof}

Combining Theorem \ref{thm:WKLImpBrouwer} and Theorem \ref{thm:BrouwerImpWKL},
we obtain the following equivalence in $\ELZero$.
\begin{theorem}
  \label{thm:EquivBFTandWKL}
  The following are equivalent.
  \begin{enumerate}
    \item $\WKL$,
    \item Brouwer's fixed-point theorem.
  \end{enumerate}
\end{theorem}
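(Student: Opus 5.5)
The equivalence is obtained by combining the two directions already established, all reasoning taking place in $\ELZero$. For (1)$\Rightarrow$(2) I would invoke Theorem~\ref{thm:WKLImpBrouwer} directly. Its proof used $\WKL$ only through the bounded form $\WKLf$, applied to the subtree $U$ of $T_\Delta$, so I would note that the equivalence of $\WKL$ and $\WKLf$ over $\ELZero$ \cite[Proposition 4.7]{FujiwaraDFT} is available. I would also check that the remaining ingredients of that proof are within the base system:
\begin{itemize}
\item Sperner's lemma (Lemma~\ref{lem:Sperner}) is a finite combinatorial statement;
\item the uses of choice are instances of $\qfAC$ (the labelling $\lambda$ and the sequences $\langle k_i\rangle$, $\langle a^i\rangle$);
\item completeness of $\Delta_n$ holds for Cauchy sequences with an explicit modulus, which here is supplied by the explicit bound on $\mesh(\Delta^i)$.
\end{itemize}

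For (2)$\Rightarrow$(1) I would invoke Theorem~\ref{thm:BrouwerImpWKL}. That proof uses only the two-dimensional instance on $\UInt\times\UInt$, whereas the statement of Brouwer's theorem in (2) is formulated for the simplices $\Delta_n$. So I would first derive the square instance from the simplex instance for $n=2$ by transporting along an explicit bi-uniformly continuous homeomorphism between $\UInt\times\UInt$ and $\Delta_2$. A concrete piecewise-linear one works: map the square onto a triangle by collapsing one edge radially from the opposite vertex, or use a radial rescaling from an interior point. Conjugating $h$ by this homeomorphism gives a uniformly continuous self-map of $\Delta_2$, and its fixed point transports back to a fixed point of $h$.

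The auxiliary principles used inside Theorem~\ref{thm:BrouwerImpWKL}, namely $\BE$ and $\LLPO$, are obtained as in Remark~\ref{rm:IVTBE}. The one-dimensional instance of the fixed-point theorem gives $\IVT$, which in turn gives both principles by \cite{IshiharaEtalBinaryExpansion}. The one-dimensional instance follows from the case $n=1$, since $\Delta_1$ is isometric to $\UInt$ up to scaling.

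The main obstacle is the bookkeeping in the conjugation step. One must write down the homeomorphism and its inverse explicitly, with moduli of uniform continuity, so that the conjugate is a legitimate uniformly continuous function in the coding of $\ELZero$. For the piecewise-linear map this is routine. Once it is done, the two directions combine to give the stated equivalence.
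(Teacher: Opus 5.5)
Your proposal is correct and follows the paper, which obtains the equivalence simply by combining Theorems~\ref{thm:WKLImpBrouwer} and~\ref{thm:BrouwerImpWKL}; your extra bookkeeping is sound, namely the passage through $\WKLf$ and the transfer from $\Delta_2$ to $\UInt\times\UInt$, which the paper leaves implicit. The one slip is your suggested map that collapses an edge: it is not injective, so it cannot be used for the conjugation, but your alternative of radial rescaling from an interior point works, as does a piecewise-affine map that splits the square along a diagonal and sends the two halves affinely onto the two halves of $\Delta_2$ cut out by a median.
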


\begin{remark}
  \label{rm:Replacement}
  The construction of the uniformly continuous function \eqref{def:h}
  can be used in place of the construction of a continuous function on
  $U$ without fixed points originally due to Orevkov
  \cite{Orevkov1963}, which is typically used to establish the
  equivalence between the following statements:
  \begin{enumerate}
    \item $\WKL$ and Brouwer's 
      fixed-point theorem \cite[Theorem 5.2]{ShiojiTanaka}
      \cite[IV.7.7]{SimpsonBook};
    \item the Weak Fan Theorem and Brouwer's Approximate
      Fixed-Point Theorem \cite[Theorem 7]{Veldman2009}; 
    \item Kleene's Alternative to the Weak Fan Theorem and
      the existence of a continuous function on $U$
      without fixed points \cite[Theorem 10]{Veldman2009}.
  \end{enumerate}
  Moreover, assuming the binary expansion of real numbers in $\UInt$,
  % 
  % \footnote{This is the case in the proof of $\WKL^{c}$ from $\IVT$ in
  % \cite[Theorem 3]{IshiharaEtalBinaryExpansion}.}
  % 
  the one-dimensional version of the proof of 
  Theorem~\ref{thm:BrouwerImpWKL} can be used to simplify the existing
  proof of the equivalence between the convex version of $\WKL$
  and the intermediate value theorem \cite[Theorem 3]{IshiharaEtalBinaryExpansion}.
\end{remark}

\subsection*{Acknowledgements}
The work is supported by JSPS KAKENHI Grant Number JP23K03197.
\bibliographystyle{abbrv}
\bibliography{$HOME/refs.bib}

\newcommand{\noop}[1]{}
\begin{thebibliography}{10}

\bibitem{IshiharaEtalBinaryExpansion}
J.~Berger, H.~Ishihara, T.~Kihara, and T.~Nemoto.
\newblock The binary expansion and the intermediate value theorem in
  constructive reverse mathematics.
\newblock {\em Arch.\ Math.\ Logic}, 58:203--217, 2019.

\bibitem{Bishop-67}
E.~Bishop.
\newblock {\em Foundations of Constructive Analysis}.
\newblock McGraw-Hill, New York, 1967.

\bibitem{Bishop-Bridges-85}
E.~Bishop and D.~Bridges.
\newblock {\em Constructive Analysis}.
\newblock Springer, Berlin, 1985.

\bibitem{CourseInTopologicalCombinatorics}
M.~de~Longueville.
\newblock {\em A Course in Topological Combinatorics}.
\newblock Universitext. Springer, 2013.

\bibitem{FujiwaraDFT}
M.~Fujiwara.
\newblock {K}\"onig's lemma, weak {K}\"onig's lemma, and the decidable fan
  theorem.
\newblock {\em Math.\ Log.\ Q.}, 67(2):241--257, 2021.

\bibitem{Hendtlass2012FixedPT}
M.~Hendtlass.
\newblock Fixed point theorems in constructive mathematics.
\newblock {\em J. Log. Anal.}, 4:1--20, 2012.

\bibitem{IshiharaLLPOWKLHahn}
H.~Ishihara.
\newblock An omniscience principle, the {K\"onig} lemma and the {Hahn--Banach}
  theorem.
\newblock {\em Z. Math. Logik Grundlag. Math.}, 36:237--240, 1990.

\bibitem{ConstRevMatheCompactness}
H.~Ishihara.
\newblock Constructive reverse mathematics: compactness properties.
\newblock In L.~Crosilla and P.~Schuster, editors, {\em From Sets and Types to
  Topology and Analysis: Towards Practicable Foundations for Constructive
  Mathematics}, number~48 in Oxford Logic Guides, pages 245--267. Oxford
  University Press, 2005.

\bibitem{IshiharaWKLimpliesFan}
H.~Ishihara.
\newblock Weak {K}\"onig's lemma implies {B}rouwer's {F}an {T}heorem: {A Direct
  Proof}.
\newblock {\em Notre Dame J. Formal Log.}, 47:249--252, 2006.

\bibitem{Ishihara2009Relativize}
H.~Ishihara.
\newblock Relativization of {Real Numbers to a Universe}.
\newblock In S.~Lindstr{\"o}m, E.~Palmgren, K.~Segerberg, and
  V.~Stoltenberg-Hansen, editors, {\em Logicism, Intuitionism, and Formalism:
  What has Become of Them?}, pages 189--207. Springer, Dordrecht, 2009.

\bibitem{KleeneVesley}
S.~C. Kleene and R.~E. Vesley.
\newblock {\em The foundations of intuitionistic mathematics, especially in
  relation to recursive functions}.
\newblock North-Holland, Amsterdam, 1965.

\bibitem{Orevkov1963}
V.~Orevkov.
\newblock A constructive mapping from the square onto itself displacing every
  constructive point.
\newblock {\em Soviet Math. Doklady}, 4:1253--1256, 1963.

\bibitem{ShiojiTanaka}
N.~Shioji and K.~Tanaka.
\newblock Fixed point theorem in weak second-order arithmetic.
\newblock {\em Ann.\ Pure Appl.\ Logic}, 47:167--188, 1990.

\bibitem{SimpsonBook}
S.~G. Simpson.
\newblock {\em Subsystems of second order arithmetic}.
\newblock Perspectives in Logic. Cambridge University Press, Cambridge, second
  edition, 2009.

\bibitem{SpernerLemma}
E.~Sperner.
\newblock Neuer beweis f\"{u}r die invarianz der dimensionszahl und des
  gebietes.
\newblock {\em Abh.\,Math.\,Semin.\,Univ.\,Hambg}, 6:265--272, 1928.

\bibitem{ConstMathI}
A.~S. Troelstra and D.~{van Dalen}.
\newblock {\em Constructivism in Mathematics: An Introduction. Volume {I}},
  volume 121 of {\em Studies in Logic and the Foundations of Mathematics}.
\newblock North-Holland, Amsterdam, 1988.

\bibitem{Veldman2009}
W.~Veldman.
\newblock Brouwer's {A}pproximate {F}ixed-{P}oint {T}heorem is {E}quivalent to
  {B}rouwer's {F}an {T}heorem.
\newblock In S.~Lindstr{\"o}m, E.~Palmgren, K.~Segerberg, and
  V.~Stoltenberg-Hansen, editors, {\em Logicism, Intuitionism, and Formalism:
  What has Become of Them?}, pages 277--299. Springer, Dordrecht, 2009.

\bibitem{VeldmanFANKleene}
W.~Veldman.
\newblock Brouwer's {F}an {T}heorem as an axiom and as a contrast to {K}leene's
  alternative.
\newblock {\em Arch.\ Math.\ Logic}, 53(5):621--693, 2014.

\bibitem{VickHomology}
J.~W. Vick.
\newblock {\em Homology theory : an introduction to algebraic topology}.
\newblock Number 145 in Graduate texts in mathematics. Springer-Verlag, second
  edition, 1994.

\end{thebibliography}
\end{document}